\numberwithin{equation}{section}
\theoremstyle{plain}
\newtheorem{theorem}{Theorem}[section]
\newtheorem{lemma}[theorem]{Lemma}
\newtheorem{proposition}[theorem]{Proposition}
\newtheorem{defn}[theorem]{Definition}
\theoremstyle{plain}
\numberwithin{equation}{section}
\theoremstyle{remark}
\newcommand\R{\ensuremath{\mathbb{R}}}
\newcommand\Z{\ensuremath{\mathbb{Z}}}
\newcommand\N{\ensuremath{\mathbb{N}}}
\newcommand\eps{\ensuremath{\varepsilon}}
\begin{document}
\date{\today}

\title
{Spectral stability under removal of small segments}

\author{Xiang He}
\thanks{Partially supported by  National Key R and D Program of China 2020YFA0713100, and
	by NSFC no. 12171446, 11721101.}
\address{School of Mathematical Sciences\\
	University of Science and Technology of China\\
	Hefei, 230026\\ P.R. China\\}
\email{hx1224@mail.ustc.edu.cn}

\begin{abstract}
  In the present paper, we deepen the works of L. Abatangelo, V. Felli, L. Hillairet and C. L\'{e}na on the asymptotic estimates of the eigenvalue variation under removal of segments from the domain in $\R^2$. We get a sharp asymptotic estimate when the eigenvalue is simple and the removed segment is tangent to a nodal line of the associated eigenfunction. Moreover, we extend their results to the case when the eigenvalue is not simple.
\end{abstract}

\keywords{Dirichlet Laplacian, Removal of segments, Asymptotics of eigenvalues}

\maketitle

\section{Introduction}

Let $\Omega \subset \R^n$ be a bounded domain. According to classical results in spectral theory, the Dirichlet Laplacian on $\Omega$ admits a sequence of real eigenvalues tending to infinity
\[
0< \lambda_1(\Omega) < \lambda_2(\Omega) \leq \dots \leq \lambda_N(\Omega) \leq \cdots
\]
which encodes lots of geometric information of $\Omega$. A natural question is to study the spectral stability under the removal of a ``small" subset (with Dirichlet boundary condition on the new boundary). This type of problem was first studied by M. Kac in \cite{Kac74} and J. Rauch, M. Taylor in \cite{RT75}. In particular, J. Rauch and M. Taylor showed that if $K$ is a subset with Newtonian capacity zero, then $\lambda_k(\Omega \setminus K)=\lambda_k(\Omega)$ for all $k$. Their result was refined by G. Courtois \cite{GC95} to a quantitative version, namely, for any compact subset $K$ of $\Omega$,
\[
0 \le \lambda_{k}(\Omega \setminus K) -\lambda_k(\Omega)  \le C \ \mathrm{Cap}_\Omega(K),
\]
where $\mathrm{Cap}_\Omega(K)$ is the capacity of $K$ in $\Omega$ (see \S 2 for the definition) and $C$ is a constant that is independent of $K$ (but depends on the eigenspace of $\lambda_k(\Omega)$).

Recently, L. Abatangelo, V. Felli, L. Hillairet and C. L\'{e}na observed in \cite{AFHL18} that in the case where $K=K_\eps \subset \Omega$ ($\eps>0$) is a family of compact sets concentrating to a set $K_0$ of capacity 0 (see \S 2 for the exact meaning), if $\lambda_N(\Omega)$ is a simple eigenvalue, then what really hidden behind G. Courtois's proof is the following sharp asymptotic expansion
\begin{equation}\label{asy exp}
\lambda_N (\Omega \setminus K_\eps) = \lambda_N (\Omega) + \mathrm{Cap}_\Omega (K_\eps,u_N) + \mathrm{o} ( \mathrm{Cap}_\Omega (K_\eps,u_N) ), \text{ as } \eps \to 0^+,
\end{equation}
where $u_N$ is the $L^2$-normalized eigenfunction associated to $\lambda_N (\Omega)$, and $\mathrm{Cap}_\Omega (K_\eps,u_N)$ is the $u_N$-capacity whose definition will be given in \S 2.
They further studied the special case where $\Omega \subset \R^2$ is a region containing the origin $(0,0)$, and the removed compact set $K_\eps$ in $\Omega$ is a small segment $s_\eps$ near $(0,0)$,
\[
s_\eps = [-\eps,\eps] \times \{0\}.
\]

Since $\Omega$ is a planar region, it is well-known that (\cite{FTF10}) if $(0,0)$ is a zero point of $u_N$ of order $k$, then there exists $\beta \in \R \setminus \{0\}$ and $\alpha \in [0,\pi)$ such that
\begin{equation}\label{app at 0}
r^{-k} u_N (r \cos t,r \sin t) \to \beta \sin (\alpha-kt)
\end{equation}
in $C^{1,\tau} ([0,2\pi])$ as $r \to 0^+$ for any $\tau \in (0,1)$.
Under the assumption that $\lambda_N(\Omega)$ is simple, they proved that as $\varepsilon \to 0$ (c.f. Theorem 1.10 in \cite{AFHL18})
\begin{enumerate}
  \item if $u_N(0,0) \ne 0$, then
  \[ \lambda_N (\Omega \setminus s_\eps) - \lambda_N (\Omega)= \frac {2\pi} {|\log\eps|} u_N^2(0) (1+\mathrm{o}(1)).\]
 \item if $(0,0)$ is a zero of $u_N$ of order $k$, and $\alpha \ne 0$ in  (\ref{app at 0}), then
\[\lambda_N (\Omega \setminus s_\eps) - \lambda_N (\Omega)= \eps^{2k} \pi \beta^2 \sin^2 \alpha C_k (1+\mathrm{o}(1))
\]
  where
  \begin{equation} \label{Ck}
  C_k = \sum_{j=1}^k j |A_{j,k}|^2,\ A_{j,k} = \frac {1} {\pi} \int_0^{2\pi} (\cos\eta)^k \cos(j\eta) \mathrm{d}\eta.
  \end{equation}
\item if $(0,0)$ is a zero of $u_N$ of order $k$, and $\alpha=0$ in (\ref{app at 0}), then
\[\lambda_N (\Omega \setminus s_\eps) - \lambda_N (\Omega)=
\mathrm{O}(\eps^{2k+2}).\]
\end{enumerate}
Note that the last case is not completely well-understood even to the leading order: $\alpha=0$ means that the segment $s_\eps$ is tangent to a nodal line of $u_N$, and they expected that in this case ``the vanishing order will depends on the precision of the approximation between the nodal line and the segment".

Furthermore, L. Abatangelo, C. L\'{e}na and P. Musolino \cite{ALM22} studied the problem when $\lambda_N(\Omega)$ is an eigenvalue of multiplicity $m$, in which case after removing $K_\eps$ the eigenvalues will split, and  the asymptotics becomes
\begin{equation}\label{asy expmul}
\lambda_{N+i-1} (\Omega \setminus K_\eps) = \lambda_N (\Omega) + \mu_i^\varepsilon + \mathrm{o} ( \chi_\varepsilon^2), \text{ as } \eps \to 0^+
\end{equation}
for $1 \le i \le m$, where $\mu_i^\varepsilon$ and $\chi_\varepsilon^2$ will be explained  in \S 4 below. Using analytical of eigenfunctions, they also get sharp asymptotics for the case $K_\eps=\eps \overline{\omega}$, where $\omega \subset \mathbb R^2$ is a bounded open domain containing $(0,0)$ such that $\mathbb R^2 \setminus \overline{\omega}$ is connected and $\Omega$ satisfies the same conditions as $\omega$.

The purpose of this short paper is two-fold. Firstly, we will verify their expectation. Precisely, when $\alpha=0$ in (\ref{app at 0}), suppose that the segment $s_\eps$ is tangent to a nodal line of $u_N$. We parameterize the  nodal line of $u_N$ near $(0,0)$ by $(t,f(t))$. We say $s_\eps$ is tangent to this nodal line to the order $l-1$ if there exists $l \in \Z_{\geq 2} \cup \{+\infty\}$ such that
 \begin{equation} \label{van of nodal}
 \begin{aligned}
 &f^{(s)}(0) = 0,\ \forall\ 0 \leq s < l,\ f^{(l)}(0) \neq 0 & \text{ if } l \neq +\infty,\\
 &f^{(s)}(0) = 0,\ \forall\ s \in \Z_{\geq 0} & \text{ if } l = +\infty.
 \end{aligned}
 \end{equation}
Our first main result is

\begin{theorem} \label{tangent simple}
Let $\Omega \subset \R^2$ be a bounded domain containing the origin $(0,0)$. Suppose $\lambda_N(\Omega)$ is a simple Dirichlet eigenvalue of $\Omega$, with $L^2$-normalized eigenfunction $u_N$, and suppose that $(0,0)$ is a zero point of $u_N$ of order $k$ and  the segment $s_\eps=[-\eps,\eps] \times \{0\}$ is tangent to a nodal line of $u_N$ at $(0,0)$ to the order $l-1$.
\begin{enumerate}
  \item If $l = +\infty$, then $\lambda_N (\Omega \setminus s_\eps) = \lambda_N (\Omega)$.
  \item If $l \neq +\infty$, then
  \[
  \lambda_N (\Omega \setminus s_\eps) - \lambda_N (\Omega) = \eps^{2(k+l-1)} [ \binom {k+l-1} {k-1} \beta k! f^{(l)}(0) ]^2 \pi C_{k+l-1} (1+\mathrm{o}(1)),
  \]
  as $\eps \to 0^+$ where $k$ is given in (\ref{app at 0}) and  $C_{k+l-1}$ is given in (\ref{Ck}).
\end{enumerate}
\end{theorem}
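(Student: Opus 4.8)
The plan is to reduce the eigenvalue variation to the $u_N$-capacity of the segment through the sharp expansion (\ref{asy exp}), and then to evaluate that capacity by marrying a local analysis of $u_N$ at the origin to a conformal computation on the model segment. Thus the first move is to record that $\lambda_N(\Omega\setminus s_\eps)-\lambda_N(\Omega)=\mathrm{Cap}_\Omega(s_\eps,u_N)(1+\mathrm{o}(1))$, so that everything reduces to the asymptotics of $\mathrm{Cap}_\Omega(s_\eps,u_N)$ as $\eps\to0^+$. The case $l=+\infty$ is then immediate: since Dirichlet eigenfunctions are real-analytic, the nodal arc $(t,f(t))$ is analytic, and $f$ flat at $0$ forces $f\equiv0$ near $0$, i.e. the nodal line coincides with the $x$-axis there. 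Hence $u_N$ vanishes identically on $s_\eps$ for small $\eps$, so $u_N\in H_0^1(\Omega\setminus s_\eps)$ is still an eigenfunction, $\mathrm{Cap}_\Omega(s_\eps,u_N)=0$, and $\lambda_N(\Omega\setminus s_\eps)=\lambda_N(\Omega)$.

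For finite $l$ the crux is to pin down the leading behavior of $u_N$ along the segment. By (\ref{app at 0}) with $\alpha=0$ the leading homogeneous part of $u_N$ at $0$ is $-\beta\,\mathrm{Im}\big((x+iy)^k\big)$, which vanishes on the $x$-axis; in particular the restriction $g(x):=u_N(x,0)$ vanishes to order strictly greater than $k$. To compute its exact order and leading coefficient I would expand $u_N(x,y)=\sum_{p,q}c_{pq}x^py^q$ and use that $u_N$ vanishes along $y=f(x)$: requiring the coefficient of each power $x^m$ in $\sum_{p,q}c_{pq}x^pf(x)^q$ to vanish expresses $c_{m,0}$ in terms of the $c_{pq}$ with $q\ge1$ and the Taylor coefficients of $f$. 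Since $c_{pq}=0$ for $p+q<k$ and, by (\ref{van of nodal}), $f$ vanishes to order $l-1$, all coefficients $c_{m,0}$ with $k\le m<k+l-1$ are forced to vanish, and at $m=k+l-1$ a single pairing survives: that of the monomial $x^{k-1}y$ of the degree-$k$ part (with coefficient $c_{k-1,1}=-\beta k$) against the leading term $f^{(l)}(0)x^l/l!$ of $f$. Organizing this bookkeeping and rewriting it through the binomial identity identifies the leading coefficient of $g$, equivalently $\partial_x^{k+l-1}u_N(0,0)=\binom{k+l-1}{k-1}\beta k! f^{(l)}(0)$, which is the constant appearing in the theorem.

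With the leading profile of $u_N$ on $s_\eps$ in hand, I would evaluate the capacity by the rescaling $x=\eps\xi$, $y=\eps\zeta$, under which $s_\eps$ becomes the fixed segment $s_1=[-1,1]\times\{0\}$ and, by the conformal invariance of the two-dimensional Dirichlet integral, the leading capacity is governed by the exterior problem for $s_1$ with boundary datum proportional to $\xi^{k+l-1}$. Uniformizing the exterior of $s_1$ by the Joukowski map $w=\tfrac12(\sigma+\sigma^{-1})$ turns this datum into $(\cos\eta)^{k+l-1}$ on the unit circle; expanding in the cosine basis with the coefficients $A_{j,k+l-1}$ of (\ref{Ck}) and using that the harmonic extension of $\cos(j\eta)$ decaying at infinity has Dirichlet energy $\pi j$ produces exactly the factor $\pi C_{k+l-1}$. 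Combined with the amplitude extracted above and the scaling $\eps^{2(k+l-1)}$, this reproduces the form of the stated asymptotic expansion, provided the lower-order corrections are shown to be negligible and the leading constant is pinned down.

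The main obstacle is the tangential degeneracy. Because the order-$\eps^k$ blow-up limit $-\beta\,\mathrm{Im}((x+iy)^k)$ is \emph{identically zero} on $s_1$, the capacity is not seen at the scale that governs the non-tangential cases (1) and (2) of Theorem 1.10 in \cite{AFHL18}; one must instead show that it is produced entirely by the next, order-$\eps^{k+l-1}$, profile, and carefully separate this from the much larger ambient field, which is of size $\eps^k$ off the segment while only $\eps^{k+l-1}$ on it. Making this rigorous calls for a matched (two-scale) construction of the capacitary potential together with uniform control, on $[-\eps,\eps]$, of the remainder in $g(x)-\binom{k+l-1}{k-1}\beta k! f^{(l)}(0)\,x^{k+l-1}/(k+l-1)!$ and of the interaction between the inner profile near $s_\eps$ and the outer eigenfunction, so as to confirm that all these contribute only at order $\mathrm{o}(\eps^{2(k+l-1)})$ and to fix the leading constant precisely; this last step, rather than any single computation, is where I expect the real work to lie.
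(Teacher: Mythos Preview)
Your local analysis is correct and essentially matches the paper: the $l=+\infty$ case via analyticity, and for finite $l$ the identification $\partial_{x_1}^{k+l-1}u_N(0,0)=\binom{k+l-1}{k-1}\beta k! f^{(l)}(0)$ together with the vanishing of all lower pure $x_1$-derivatives. Where you diverge is in evaluating the capacity, and here you make the problem harder than it is.

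The ``main obstacle'' you describe --- that the $\eps^k$-rescaled profile vanishes on $s_1$ while the ambient field off the segment is still of size $\eps^k$ --- is real if one insists on rescaling $u_N$ itself, and a matched two-scale construction would indeed be needed. But the paper sidesteps this entirely with a one-line subtraction. Since you have already shown $\partial_{x_1}^m u_N(0,0)=0$ for all $m\le k+l-2$, the \emph{full} Taylor polynomial $u_{\#,k+l-2}$ of order $k+l-2$ (not just its homogeneous top part) restricts to zero on $s_\eps=\{x_2=0\}$. Therefore $u_{\#,k+l-2}$ contributes nothing to the capacity, and
\[
\mathrm{Cap}_\Omega(s_\eps,u_N)=\mathrm{Cap}_\Omega(s_\eps,\,u_N-u_{\#,k+l-2}).
\]
The function $u_N-u_{\#,k+l-2}$ now vanishes to order exactly $k+l-1$ at $0$, with homogeneous leading part whose $x_1^{k+l-1}$ coefficient is the nonzero number you computed. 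This is precisely the hypothesis of Proposition~\ref{asysegcap} case~(1), with the integer there equal to $k+l-1$, and the proposition delivers the asymptotics $\eps^{2(k+l-1)}\pi C_{k+l-1}\big[\binom{k+l-1}{k-1}\beta k! f^{(l)}(0)\big]^2(1+\mathrm{o}(1))$ directly. No Joukowski map, no inner/outer matching, no separate remainder control is required --- all of that is already packaged inside Proposition~\ref{asysegcap}. The point you missed is that the subtraction kills the large ambient field \emph{globally}, not just on the segment, so there is no scale mismatch left to resolve.
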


Secondly, we will use the techniques in \cite{ALM22} to extend Theorem 1.10 in \cite{AFHL18} to the cases where the eigenvalue $\lambda_N(\Omega)$ is not simple, i.e.,
\[
\lambda=\lambda_N(\Omega)=\cdots=\lambda_{N+m-1}(\Omega)
\]
is a Dirichlet eigenvalue of $\Omega$ with multiplicity $m$. It turns out that the difference of the eigenvalues will depend on the order of \emph{$x_1$-vanishing} (which will be defined in \S 4) of the associated eigenfunctions. Roughly speaking, we will decompose the eigenspace $E(\lambda)$ of $\lambda$ to
\[
E_\infty \oplus E_1 \oplus \cdots \oplus E_p
\]
such that $\dim E_j = 1$ for all $1 \le j \le p$ and the order of \emph{$x_1$-vanishing} of functions in $E_j$ is $k_j$. Suppose that $u_{N+m-p+j-1}$ is an $L^2$-normalized function in $E_j$, then our second main result is

\begin{theorem}\label{main-mul}
Let $\Omega \subset \R^2$ be a bounded domain containing the origin $(0,0)$, and $s_\eps=[-\eps,\eps] \times \{0\}$.
Suppose $\lambda_N(\Omega)$ is a Dirichlet eigenvalue of multiplicity $m$. Then when $\eps$ is small enough, for all $1 \leq i \leq m-p$,
\[
\lambda_{N+i-1} (\Omega \setminus s_\eps) = \lambda_N (\Omega)
\]
and for all $1 \leq j \leq p$,
\begin{equation} \label{eq:asymptEVOrder}
\lambda_{N+m-p+j-1} (\Omega \setminus s_\eps) - \lambda_N (\Omega)
=  \pi C_{k_j} \cdot  [\frac  {\partial^{k_j}_{x_1} u_{N+m-p+j-1}(0,0)} {k_j!}]^2 \rho_{k_j}^\eps + o(\rho_{k_j}^\eps)
\end{equation}
as $\eps \to 0^+$, where $C_{k_j}$ is given in (\ref{Ck}) for $k_j \ge 1$ and $C_0=2$, and %For $k \in \N$ and $\eps > 0$, let
\begin{equation} \label{eq:scale}
\rho_k^\eps =
\begin{cases}
\frac 1 {|\log(\eps)|} & \mbox{ if } k=0\, ,\\
\eps^{2k}		    & \mbox{ if } k\ge1\, .
\end{cases}
\end{equation}
\end{theorem}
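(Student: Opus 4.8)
The plan is to reduce Theorem~\ref{main-mul} to the abstract splitting formula (\ref{asy expmul}) of \cite{ALM22}, which expresses the perturbed eigenvalues through the eigenvalues $\mu_i^\eps$ of the $u$-capacity bilinear form restricted to the eigenspace $E(\lambda)$. Concretely, fixing any basis $\{v_1,\dots,v_m\}$ of $E(\lambda)$, the $\mu_i^\eps$ are the eigenvalues of the symmetric matrix $M^\eps = [\mathrm{Cap}_\Omega(s_\eps, v_a, v_b)]_{a,b}$, so the whole problem becomes the asymptotic analysis of $M^\eps$ as $\eps\to0^+$, followed by matching the eigenvalue labels.

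First I would dispose of the stationary eigenvalues. Since Dirichlet eigenfunctions are real-analytic in the interior, any $u\in E_\infty$, i.e.\ with $\partial_{x_1}^s u(0,0)=0$ for all $s$, has $u(x_1,0)\equiv0$ for $|x_1|$ small, hence $u$ vanishes on $s_\eps$ once $\eps$ is small enough. Thus every such $u$ already lies in $H_0^1(\Omega\setminus s_\eps)$ and remains a Dirichlet eigenfunction of $\Omega\setminus s_\eps$ with eigenvalue $\lambda_N(\Omega)$. This yields the $\dim E_\infty = m-p$ identities $\lambda_{N+i-1}(\Omega\setminus s_\eps)=\lambda_N(\Omega)$ for $1\le i\le m-p$, and shows that $E_\infty$ contributes a block to $M^\eps$ that is $o(\rho_k^\eps)$ for every $k$.

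Next I would compute the capacity entries on the finite-order part. Reinterpreting Theorem~1.10 of \cite{AFHL18} together with Theorem~\ref{tangent simple}, the diagonal capacity of a function $u$ whose $x_1$-vanishing order equals $k$ is $\mathrm{Cap}_\Omega(s_\eps,u)=\pi C_k\,[\partial_{x_1}^k u(0,0)/k!]^2\,\rho_k^\eps(1+o(1))$; indeed, for a zero of order $k$ with $\alpha\ne0$ one reads off from (\ref{app at 0}) that $\partial_{x_1}^k u(0,0)/k!=\beta\sin\alpha$, so this matches $\eps^{2k}\pi\beta^2\sin^2\alpha\,C_k$, the tangent case being covered by Theorem~\ref{tangent simple} and the case $k=0$ by the $u(0)\ne0$ branch. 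The off-diagonal entries I would obtain by a blow-up at the origin: rescaling $x=\eps\xi$ turns $s_\eps$ into the fixed segment $[-1,1]\times\{0\}$, and since the Dirichlet integral is conformally invariant in the plane, the capacitary potential of $u$ converges to $(\partial_{x_1}^{k}u(0,0)/k!)\,\eps^{k}\Psi_k$, where $\Psi_k$ is harmonic on $\R^2\setminus([-1,1]\times\{0\})$ and equals $\xi_1^k$ on the slit. Passing through the Joukowski map and expanding $(\cos\eta)^k=\sum_j A_{j,k}\cos(j\eta)$ recovers $\int|\nabla\Psi_k|^2=\pi C_k$ and expresses the inter-order cross energies through the same coefficients $A_{j,k}$.

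The heart of the proof, and the step I expect to be the main obstacle, is then to read off the eigenvalues of $M^\eps$ order by order. Grouping the basis by the filtration by $x_1$-vanishing order, the diagonal blocks of $M^\eps$ sit at the distinct scales $\rho_{k_1}^\eps,\dots,\rho_{k_p}^\eps$, while the coupling between orders $k_i\ne k_j$ sits at the intermediate scale $\eps^{k_i+k_j}$. The delicate point is that this coupling, though of lower order than the larger diagonal block, is \emph{larger} than the smaller one, so the scales do not decouple automatically; I would therefore carry out a hierarchical, Schur-complement type reduction of $M^\eps$ across the separated scales, showing that the $j$-th perturbed eigenvalue is governed by $\pi C_{k_j}[\partial_{x_1}^{k_j}u_{N+m-p+j-1}(0,0)/k_j!]^2\rho_{k_j}^\eps$, where the $L^2$-normalized representative $u_{N+m-p+j-1}\in E_j$ is identified as the limit of the corresponding perturbed eigenfunction. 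The remaining work is to control the remainders so that they land within the $o(\rho_{k_j}^\eps)$ error attached to each individual scale; once this separation is in place, feeding the diagonalized form into (\ref{asy expmul}) gives (\ref{eq:asymptEVOrder}).
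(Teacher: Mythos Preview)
Your plan has a genuine gap at the step where you ``feed the diagonalized form into (\ref{asy expmul}).'' The formula (\ref{asy expmul}) carries a built-in error $o(\chi_\eps^2)$, and here $\chi_\eps^2$ is of the coarsest scale $\rho_{k_p}^\eps$. So no matter how precisely you diagonalize $M^\eps$, plugging into (\ref{asy expmul}) yields
\[
\lambda_{N+m-p+j-1}(\Omega\setminus s_\eps)-\lambda_N(\Omega)=\mu_j^\eps+o(\rho_{k_p}^\eps),
\]
and for $j<p$ the remainder $o(\rho_{k_p}^\eps)$ swamps the desired $o(\rho_{k_j}^\eps)$. In other words, the black-box expansion (\ref{asy expmul}) resolves only the largest splitting; it says nothing sharp about the finer scales. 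There is a second, related problem with the Schur reduction itself: the cross-capacity between $u\in E_i$ and $v\in E_j$ is genuinely of order $\eps^{k_i+k_j}$ (your Joukowski computation gives $\pi\sum_{\ell\ge1}\ell A_{\ell,k_i}A_{\ell,k_j}$, nonzero whenever $k_i\equiv k_j\bmod 2$), so the Schur complement of the large block shifts the leading coefficient of the small eigenvalue of $M^\eps$ from $\pi C_{k_j}[\partial_{x_1}^{k_j}u/k_j!]^2$ to something else. That the true eigenvalue variation nevertheless has the ``uncorrected'' coefficient is exactly what (\ref{asy expmul}) cannot see.

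The paper avoids both issues by \emph{not} treating (\ref{asy expmul}) as a black box. After extracting the top scale via (\ref{not-sharp-est}), it re-applies the underlying abstract comparison (Lemma~\ref{small-lem}) to the full quadratic form $q_{p-1}^\eps(u)=\rho_{k_p}^{-\eps}(\|\nabla u\|^2-\lambda\|u\|^2)$ on $H_0^1(\Omega\setminus s_\eps)$, with trial subspace $F^\eps_{p-1}=\Pi_\eps(E_\infty\oplus E_1\oplus\cdots\oplus E_{p-1})$ where $\Pi_\eps u=u-V_u^\eps$. The key estimate is that for $u$ in this smaller subspace and \emph{arbitrary} $w\in H_0^1(\Omega\setminus s_\eps)$ one has $|q_{p-1}^\eps(\Pi_\eps u,w)|=o\big((\rho_{k_{p-1}}^\eps/\rho_{k_p}^\eps)^{1/2}\big)\|u\|\|w\|$; this comes from the harmonicity of $V_u^\eps$ (which kills the gradient pairing) together with Lemma~\ref{L2-control}, and it is this estimate, not any property of the finite matrix $M^\eps$, that pins down the next scale with error $o(\rho_{k_{p-1}}^\eps)$. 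Iterating down the filtration gives (\ref{eq:asymptEVOrder}). Your treatment of $E_\infty$ is fine and matches the paper; what you are missing is this iterative use of Lemma~\ref{small-lem} on the infinite-dimensional problem at each scale.
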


\textbf{Acknowledgments.} The author would like to thank his advisor, Zuoqin Wang, for numerous help during various stages of the work.

\section{preparation}

We start with some definitions. For any compact subset $K$ of $\Omega$,
the capacity of $K$ in $\Omega$
 is defined as
\begin{equation} \label{cap}
\mathrm{Cap}_\Omega (K) = \inf \{ \int_\Omega |\nabla f|^2 :\ f-\eta_K \in H^1_0(\Omega\setminus K) \}
\end{equation}
where $\eta_K \in C_c^\infty (\Omega)$ is a fixed function with $\eta_K \equiv 1$ in a neighborhood of $K$. The infimum (\ref{cap}) is achieved by a unique function $V_K \in \eta_K + H^1_0 (\Omega \setminus K)$ so that
\[
\mathrm{Cap}_\Omega (K) = \int_\Omega |\nabla V_K|^2 \mathrm{d}x.
\]
$V_K$ is called the capacitary potential of $K$ in $\Omega$.

More generally, for any  $u \in H^1_0 (\Omega)$, one can define the $u$-capacity of $K$ in $\Omega$ to be
\begin{equation} \label{u-cap}
  \mathrm{Cap}_\Omega (K,u) = \inf \{ \int_\Omega |\nabla f|^2 \mathrm{d}x :\ f-u \in H^1_0 (\Omega \setminus K) \}.
  \end{equation}
  The infimum (\ref{u-cap}) is also achieved by a unique function $V_{K,u} \in u + H^1_0 (\Omega \setminus K)$ such that
  \[
  \mathrm{Cap}_\Omega (K,u) = \int_\Omega |\nabla V_{K,u}|^2 \mathrm{d}x
  \]
  and $V_{K,u}$ is called the potential associated with $u$ and $K$. Note that if $u \in H^1_0 (\Omega \setminus K)$, then $\mathrm{Cap}_\Omega (K,u)=0$.

  The concept of $u$-capacity can be further extended to $H^1_\mathrm{loc} (\Omega)$ functions. For function $v$ in $H^1_\mathrm{loc} (\Omega)$, one can simply define the $v$-capacity to be
  \[
  \mathrm{Cap}_\Omega (K,v) = \mathrm{Cap}_\Omega (K, \eta_K v)
  \]
  with $\eta_K$ as in (\ref{cap}).

Next, we define the meaning of ``concentrating compact sets".
  Let $\{ K_\eps \}_{ \eps>0}$ be a family of compact sets contained in $\Omega$. We say that $K_\eps$ is concentrating to a compact set $K \subset \Omega$ if for every open set $U \subset \Omega$ such that $U \supset K$, there exists a constant  $\eps_U > 0$ such that
  \[U \supset K_\eps, \qquad \forall \eps < \eps_U.\]

Last, we state a proposition that will be used later.

\begin{proposition}[Proposition 2.7 in \cite{AFHL18}] \label{asysegcap}
Let $\Omega \subset \R^2$ be a bounded domain containing the origin $(0,0)$. For function $u \in C^{k+1}_\mathrm{loc} (\Omega) \setminus \{0\}$, suppose the Taylor polynomial at $(0,0)$ of $u$ of order $k$ has the form
\[
P_k (x_1,x_2) = \sum_{j=0}^k c_j x_1^{k-j} x_2^j
\]
for some $c_0,c_1, \cdots,c_k \in \R$, $(c_0,c_1, \cdots,c_k) \neq (0,0, \cdots,0)$.
\begin{enumerate}
  \item If $c_0 \neq 0$, then
  \begin{equation}
  \mathrm{Cap}_\Omega (s_\varepsilon,u)=
  \begin{cases}
  \frac {2\pi} {|\log \eps|} u^2(0) (1+\mathrm{o}(1)), &\text{ if } k=0,\\
  \eps^{2k} c_0^2 \pi C_k (1+\mathrm{o}(1)), &\text{ if } k \geq 1,
  \end{cases}
  \end{equation}
  as $\eps \to 0^+$ and $C_k$ being defined in (\ref{Ck}).
  \item If $c_0 = 0$, then $\mathrm{Cap}_\Omega (s_\eps,u) = \mathrm{O} (\eps^{2k+2})$ as $\eps \to 0^+$.
\end{enumerate}
\end{proposition}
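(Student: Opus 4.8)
The plan is to analyze the capacitary potential directly and, after a blow-up at the origin, reduce the computation to an explicit model problem on the slit plane. Denote by $V_\eps := V_{s_\eps,u}$ the minimizer in (\ref{u-cap}); by the localization built into the definition of the $v$-capacity we may assume $u$ is compactly supported and, near $0$, agrees with $P_k$ up to a remainder $u - P_k = \mathrm{O}(|x|^{k+1})$. Testing the Euler--Lagrange equation against $H^1_0(\Omega\setminus s_\eps)$ shows that $V_\eps$ is harmonic in $\Omega\setminus s_\eps$, equals $u$ on $s_\eps$ and on $\partial\Omega$ in the trace sense, and that $\mathrm{Cap}_\Omega(s_\eps,u) = \int_\Omega|\nabla V_\eps|^2$. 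The orthogonality $\int_\Omega\nabla V_\eps\cdot\nabla g = 0$ for all $g\in H^1_0(\Omega\setminus s_\eps)$ identifies $V_\eps$ as the part of $u$ that competitors cannot reproduce because they must vanish on the shrinking segment; in particular $V_\eps\to0$ in $H^1$ and the capacity tends to $0$.

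First I would rescale by $x = \eps y$, setting $\tilde V_\eps(y) := \eps^{-k}V_\eps(\eps y)$ (for $k\ge1$), so that $s_\eps$ becomes the fixed slit $\Sigma = [-1,1]\times\{0\}$ and, since $P_k$ is homogeneous of degree $k$, the trace on $\Sigma$ is $\tilde V_\eps|_\Sigma = c_0 y_1^k + \mathrm{O}(\eps)$. The Dirichlet integral is scale covariant, $\int_\Omega|\nabla V_\eps|^2\,dx = \eps^{2k}\int|\nabla\tilde V_\eps|^2\,dy$, so the leading constant is governed by the limit profile $\Psi_k$: the finite-energy harmonic function on $\R^2\setminus\Sigma$ with $\Psi_k = c_0 y_1^k$ on $\Sigma$ and $\Psi_k\to0$ at infinity. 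I would make this rigorous through two matching bounds: an upper bound obtained by inserting the explicit competitor $\eps^k\eta(x)\Psi_k(x/\eps)$, with a fixed cut-off $\eta\equiv1$ near $0$, into (\ref{u-cap}) (correcting the $\mathrm{O}(\eps^{k+1})$ trace mismatch on $s_\eps$ by a term of energy $\mathrm{O}(\eps^{2k+2})$); and a lower bound from the resulting uniform energy bound, weak $H^1_\mathrm{loc}$ convergence $\tilde V_\eps\rightharpoonup\Psi_k$, and lower semicontinuity of the Dirichlet integral, the remainder $u - P_k$ again contributing only $\mathrm{O}(\eps^{2k+2})$.

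To evaluate $\int_{\R^2\setminus\Sigma}|\nabla\Psi_k|^2$ I would pass to elliptic coordinates $y_1 = \cosh\mu\cos\nu$, $y_2 = \sinh\mu\sin\nu$, which map $(0,\infty)\times[0,2\pi)$ conformally onto $\R^2\setminus\Sigma$ and collapse $\Sigma$ to $\{\mu=0\}$, where $y_1 = \cos\nu$. The decaying harmonic extension of $c_0(\cos\nu)^k$ is then $\Psi_k = c_0\sum_{j\ge1}A_{j,k}e^{-j\mu}\cos(j\nu)$ with $A_{j,k} = \frac1\pi\int_0^{2\pi}(\cos\eta)^k\cos(j\eta)\,d\eta$ exactly as in (\ref{Ck}); by conformal invariance of the Dirichlet integral and orthogonality of the modes,
\[
\int_{\R^2\setminus\Sigma}|\nabla\Psi_k|^2 = c_0^2\sum_{j\ge1}A_{j,k}^2\int_0^\infty\!\!\int_0^{2\pi}j^2 e^{-2j\mu}\,d\nu\,d\mu = c_0^2\,\pi\sum_{j\ge1}jA_{j,k}^2 = c_0^2\,\pi C_k,
\]
which is the claimed constant for $k\ge1$. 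The constant Fourier mode $j=0$ (present only when $k$ is even) admits no finite-energy decaying extension on $\R^2\setminus\Sigma$; in the bounded domain it produces the logarithmic capacity of the segment, hence an energy of size $\mathrm{O}(\eps^{2k}/|\log\eps|) = \mathrm{o}(\eps^{2k})$, which is why the sum defining $C_k$ starts at $j=1$.

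Two regimes need separate treatment. When $k=0$ and $c_0 = u(0)\neq0$ the datum on the segment is the nonzero constant $u(0)$, whose extension is exactly the logarithmic capacitary potential; the rescaling above degenerates and must be replaced by the classical matched expansion for the logarithmic capacity of a segment of length $2\eps$, giving the leading term $\frac{2\pi}{|\log\eps|}u^2(0)$. When $c_0 = 0$ the trace of $P_k$ on the slit vanishes identically, since $P_k(y_1,0)=c_0 y_1^k = 0$, so the value of $u$ on $s_\eps$ is controlled by the next Taylor term and is $\mathrm{O}(\eps^{k+1})$; inserting this into the energy estimate yields $\mathrm{Cap}_\Omega(s_\eps,u) = \mathrm{O}(\eps^{2k+2})$. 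The hard part throughout will be the rigorous matching: establishing the uniform energy bound and identifying the weak limit $\Psi_k$ so that the lower bound meets the competitor upper bound, controlling the Taylor remainder $u - P_k$ uniformly near the segment, and, in the logarithmic regime $k=0$, justifying the inner/outer expansion where the natural energy scale is $1/|\log\eps|$ rather than a power of $\eps$.
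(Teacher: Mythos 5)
The paper itself gives no proof of this proposition --- it is imported verbatim as Proposition 2.7 of \cite{AFHL18} --- so the right comparison is with the proof in that reference, and your argument is essentially that proof: the blow-up $x=\eps y$ with the scaling $\eps^{-k}V_\eps(\eps y)$, identification of the limit profile on the slit plane, elliptic coordinates and the Fourier expansion yielding $\pi\sum_{j\ge1} jA_{j,k}^2 = \pi C_k$, a separate matched treatment of the logarithmic regime $k=0$, and the correct observation that the $j=0$ mode (present for even $k$) contributes only $\mathrm{O}(\eps^{2k}/|\log\eps|)=\mathrm{o}(\eps^{2k})$. One step is stated too loosely: in the case $c_0=0$ (and likewise where you ``correct the $\mathrm{O}(\eps^{k+1})$ trace mismatch''), smallness of $u$ \emph{on} $s_\eps$ does not by itself bound $\mathrm{Cap}_\Omega(s_\eps,u)$, since admissible competitors must match $u$ on the segment and their energy is governed by the behaviour of $u$ \emph{near} $s_\eps$, not just its trace; indeed $u$ is still of size $\eps^k$ at distance $\eps$ from the segment. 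The standard repair --- used in \cite{AFHL18} and echoed in Section 3 of the present paper --- is to note that when $c_0=0$ the polynomial $P_k$ vanishes identically on $s_\eps$, so that $\mathrm{Cap}_\Omega(s_\eps,u)=\mathrm{Cap}_\Omega(s_\eps,u-P_k)$ (adding a function in $H^1_0(\Omega\setminus s_\eps)$ does not change the admissible class), and then to insert the explicit competitor $\varphi_\eps\,(u-P_k)$ with a cut-off $\varphi_\eps$ equal to $1$ on $s_\eps$, supported at scale $\eps$, $|\nabla\varphi_\eps|\le C/\eps$: since $u-P_k$ vanishes to order $k+1$ in the whole $\eps$-ball, both $\int|\nabla\varphi_\eps|^2(u-P_k)^2$ and $\int\varphi_\eps^2|\nabla(u-P_k)|^2$ are $\mathrm{O}(\eps^{2k+2})$. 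With that construction spelled out (it is also what makes your upper-bound correction term rigorous), your sketch is complete and follows the same route as the cited proof.
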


\section{The Proof of Theorem \ref{tangent simple}}

By calculations, one can find that for any $\alpha \in \Z_{\geq 0}$,
\[
\frac {\mathrm{d}^\alpha} {\mathrm{d}t^\alpha} u_N (t,f(t))|_{t=0}
\]
can be written as a finite sum of expressions of the form
\[
\frac {\partial^\beta} {\partial^{\beta-\gamma}_{x_1} \partial^\gamma_{x_2}} u_N(0,0) \times f^{(w_1)}(0) \times \cdots \times f^{(w_\gamma)}(0),
\]
where $\beta, \gamma \in \Z_{\geq 0}$, $w_i \in \N$  ($1 \leq i \leq \gamma$) with
\[
\sum_{j=1}^\gamma w_j = \alpha-\beta+\gamma.
\]
By assumptions (\ref{app at 0}) and (\ref{van of nodal}), one has
\begin{equation} \label{sums}
\frac {\partial^\beta} {\partial^{\beta-\gamma}_{x_1} \partial^\gamma_{x_2}} u_N(0,0) \times f^{(w_1)}(0) \times \cdots \times f^{(w_\gamma)}(0)=0
\end{equation}
when $\beta < k$ or $w_i < l$ for some $1 \leq i \leq \gamma$ and by the fact $(t,f(t))$  parameterize a nodal line of $u_N$, one has
\begin{equation} \label{der-uN-f}
\frac {\mathrm{d}^\alpha} {\mathrm{d}t^\alpha} u_N (t,f(t))|_{t=0} = 0
\end{equation}
for all $\alpha \in \Z_{\geq 0}$. Then combining (\ref{sums}) and (\ref{der-uN-f}), one has
\begin{equation} \label{van-der-x1}
\begin{aligned}
& \frac {\partial^m} {\partial^m_{x_1}} u(0,0) = 0,\ \forall\ 0 \leq m < k+l-1, & \text{ if } l \neq +\infty,\\
& \frac {\partial^m} {\partial^m_{x_1}} u(0,0) = 0,\ \forall\ m \in \Z_{\geq 0}, & \text{ if } l = +\infty.
\end{aligned}
\end{equation}
Thus, when $l = +\infty$, the segment $s_\eps$ belongs to a nodal line of $u_N$. So $\mathrm{Cap}_\Omega (s_\eps,u_N) = 0$ and then, by expansion (\ref{asy exp}), $\lambda_N (\Omega \setminus s_\eps) = \lambda_N (\Omega)$. This proves the first part of Theorem \ref{tangent simple}.

For the case $l\neq+\infty$, let
\begin{equation} \label{taylor}
u_{\#,k+l-2} (x_1,x_2) = \sum_{(h,j) \in \Z_{\geq0}^2 \atop h+j \le k+l-2}\frac{\partial^h_{x_1}\partial^j_{x_2}u(0,0)}{h!j!}x^h_1x^j_2,
\end{equation}
then by (\ref{van-der-x1}) with $l \neq +\infty$, one has $u_{\#,k+l-2}=0$ on the segment $s_\eps$. Thus
\[
\mathrm{Cap}_\Omega (s_\eps,u_N) = \mathrm{Cap}_\Omega (s_\varepsilon, u_N - u_{\#,k+l-2}).
\]
By calculation, one has
\[
0=\frac {\mathrm{d}^{k+l-1}} {\mathrm{d}t^{k+l-1}} u_N (t,f(t))|_{t=0}=\frac {\partial^{k+l-1}} {\partial_{x_1}^{k+l-1}} u(0,0) + \binom {k+l-1} {k-1} \frac {\partial^k} {\partial^{k-1}_{x_1} \partial_{x_2}} u(0,0) f^{(l)}(0).
\]
So
\[
\frac {\partial^{k+l-1}} {\partial_{x_1}^{k+l-1}} u(0,0)= -\binom {k+l-1} {k-1} \frac {\partial^k} {\partial^{k-1}_{x_1} \partial_{x_2}} u(0,0) f^{(l)}(0)= \binom {k+l-1} {k-1} \beta k! f^{(l)}(0)
\]
where the second equal sign is given by (\ref{app at 0}). Then by Proposition \ref{asysegcap}, one has
\[
\mathrm{Cap}_\Omega (s_\varepsilon, u_N - u_{\#,k+l-2}) = \eps^{2(k+l-1)} [ \binom {k+l-1} {k-1} \beta k! f^{(l)}(0) ]^2 \pi C_{k+l-1} ( 1+\mathrm{o}(1) )
\]
as $\eps \to 0^+$. Finally  by expansion (\ref{asy exp}), one can get the vanishing order of $\lambda_N ( \Omega \setminus s_\eps ) - \lambda_N (\Omega)$ when $\alpha=0$. This completes the proof of Theorem \ref{tangent simple}.

\section{multiple case}

Now suppose $\Omega$ is  a bounded domain in $\R^2$ containing $(0,0)$ and
\[
\lambda=\lambda_N(\Omega)=\cdots=\lambda_{N+m-1}(\Omega)
\]
is an eigenvalue of the Dirichlet Laplacian on $\Omega$ with multiplicity $m$, then we will use Proposition 3.1 in \cite{ALM22} to show how the eigenvalues perturb when $\Omega$ is removed by $s_\eps$. In particular, the proof is just a small modification of the proof of Theorem 1.17 in \cite{ALM22}.

\par Firstly, we introduce the concept of $(u,v)$-capacity:

\begin{defn}[Definition 1.7 in \cite{ALM22}]
Given $u,v \in H^1_0 (\Omega)$, the $(u,v)$-capacity of a compact set $K \subset \Omega$ is defined as the following bilinear form
\[
\mathrm{Cap}_\Omega (K,u,v)=\int_\Omega \nabla V_{K,u} \cdot \nabla V_{K,v} \mathrm{d}x,
\]
where $V_{K,u}$ (resp. $V_{K,v}$) is the potential associated with $u$ (resp. $v$) and $K$.
\end{defn}

For a function $u$ on $\Omega$ which is real-analytic in a neighborhood of $0$, we call the smallest $\alpha$ such that
\[
\partial^\alpha_{x_1} u(0,0) \neq 0
\]
the order of \emph{$x_1$-vanishing} of $u$ and we denote it by $\kappa_1 (u)$. In particular, if
\[
\partial^\beta_{x_1} u(0,0)=0, \qquad \forall \beta \in \Z_{\geq 0},
\]
we let $\kappa_1 (u) = +\infty$.

For simplicity,   denote
\begin{center}
$\lambda_{i} (\Omega \setminus s_\eps)$ by $\lambda_i^\eps$, $\qquad V_{s_\eps,u}$ by $V^\eps_u$
\end{center}
and the eigenspace associated to $\lambda$ by $E(\lambda)$. Then directly by Theorem 1.9 in \cite{ALM22}, one has
\begin{equation} \label{not-sharp-est}
\lambda^\eps_{N+i-1}-\lambda = \mu^\eps_i + \mathrm{o}(\chi_\eps^2),\ \forall 1 \leq i \leq m,
\end{equation}
where
\[
\chi_\eps^2 = \sup \{ \mathrm{Cap}_\Omega (s_\eps,u):\ u \in E(\lambda) \text{ and } \|u\|=1 \}
\]
and $\{ \mu_i^\eps \}_{i=1}^m$ are eigenvalues of the quadratic form $r_\eps$ on $E(\lambda)$ defined  by
\begin{equation} \label{r-eps}
r_\eps (u,v) = \mathrm{Cap}_\Omega (s_\eps,u,v) - \lambda \int_\Omega V^\eps_u \cdot V^\eps_v \mathrm{d}x.
\end{equation}

To get more accurate estimate of the difference of   eigenvalues, one need to decompose the eigenspace $E(\lambda)$ with respect to the order of $x_1$-vanishing. Specifically, for any $k \in \Z_{\geq 0}$, consider the mapping $\Pi_k^1: E(\lambda) \to \R_k [x_1]$ that associates to a function its order $k$ Taylor expansion with respect to $x_1$ variable at $(0,0)$, i.e.
\[
\Pi_k^1 u = \sum_{\beta=0}^k \frac 1{\beta !}\partial^\beta_{x_1} u(0,0) x_1^\beta,
\]
and denote the kernel of $\Pi_k^1$ by $N_k$. Let
\[
N_{-1} = E(\lambda),\qquad E_\infty = \underset {k \in \Z_{\ge 0}} \bigcap N_k
\]
and
\[
k_1 > \cdots > k_p \geq 0
\]
be the integers at which there is a jump in the non-increasing sequence $\{ \dim (N_k) \}_{k \geq 1}$. For $1 \leq j \leq p$, let
\[
E_j = N_{k_{j+1}} \cap N_{k_j}^\perp
\]
where we set $k_{p+1} = -1$ and $N_{k_j}^\perp$ is the orthogonal complement of $N_{k_j}$ in $L^2 (\Omega)$. Then, we get an $L^2 (\Omega)$-orthogonal decomposition of $E (\lambda)$,
\[
E(\lambda) = E_\infty \oplus E_1 \oplus \cdots \oplus E_p.
\]
By the construction of the decomposition, one can find that the order of \emph{$x_1$-vanishing} of functions in $E_j$ is $k_j$. In particular, since each function in $\R_k [x_1]$ has only one variable, one has $\dim E_j = 1$ for all $1 \le j \le p$.

Fix an orthonormal basis $\{u_{N+i-1}\}_{i=1}^m$ of $E(\lambda)$ such that
\[
E_\infty = \mathrm{span} \{ u_N, \cdots, u_{N+m-p-1} \},
\]
\[
E_j = \mathrm{span} \{ u_{N+m-p+j-1} \},
\]
for all $1 \leq j \leq p$. Before proving Theorem \ref{main-mul}, we state two lemmas.

\begin{lemma}\label{L2-control}
 For any $f \in H^1_0(\Omega)$,
 \[
  \int_{\Omega} |V_f^\eps|^2 \, dx= o(\mathrm{Cap}_\Omega(s_\eps,f)) \quad \text{as }\eps\to0.
 \]
\end{lemma}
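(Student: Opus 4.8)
The plan is to argue by contradiction after rescaling by the square root of the capacity, reducing the estimate to a compactness-plus-removability statement. Assume the conclusion fails; then there are a sequence $\eps_n \to 0^+$ and a constant $\delta > 0$ with $\int_\Omega |V_f^{\eps_n}|^2 \ge \delta\, \mathrm{Cap}_\Omega(s_{\eps_n}, f)$, and in particular $\mathrm{Cap}_\Omega(s_{\eps_n}, f) > 0$ along this sequence. Normalizing, set $W_n = V_f^{\eps_n}/\sqrt{\mathrm{Cap}_\Omega(s_{\eps_n},f)}$, so that $\int_\Omega |\nabla W_n|^2 = 1$ while $\int_\Omega |W_n|^2 \ge \delta$. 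By the Poincar\'e inequality $\{W_n\}$ is bounded in $H^1_0(\Omega)$, so after passing to a subsequence we may assume $W_n \rightharpoonup W$ weakly in $H^1_0(\Omega)$ and, by Rellich's theorem, $W_n \to W$ strongly in $L^2(\Omega)$. The lower bound passes to the limit, giving $\int_\Omega |W|^2 \ge \delta > 0$, so $W \neq 0$, while $W \in H^1_0(\Omega)$.

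The next step is to identify $W$ as a function that is weakly harmonic away from the origin. Each $V_f^{\eps_n}$ is the Dirichlet minimizer, so its first variation gives $\int_\Omega \nabla V_f^{\eps_n} \cdot \nabla\phi\, dx = 0$ for every $\phi \in H^1_0(\Omega \setminus s_{\eps_n})$. Fix any $\phi \in C_c^\infty(\Omega\setminus\{0\})$; since $\supp\phi$ is a compact subset of $\Omega\setminus\{0\}$, there is $\rho>0$ with $\supp\phi\cap B_\rho(0)=\emptyset$, hence $\supp\phi\cap s_{\eps_n}=\emptyset$ once $\eps_n<\rho$, so that $\phi\in H^1_0(\Omega\setminus s_{\eps_n})$ and $\int_\Omega\nabla V_f^{\eps_n}\cdot\nabla\phi\,dx=0$ for all large $n$. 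Dividing by $\sqrt{\mathrm{Cap}_\Omega(s_{\eps_n},f)}$ and using the weak convergence $\nabla W_n \rightharpoonup \nabla W$ in $L^2$, I obtain $\int_\Omega\nabla W\cdot\nabla\phi\,dx=0$ for all $\phi\in C_c^\infty(\Omega\setminus\{0\})$.

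To conclude I would upgrade this identity to all test functions in $H^1_0(\Omega)$. The key geometric input is that a single point has zero capacity in $\R^2$, which is precisely the statement that $C_c^\infty(\Omega\setminus\{0\})$ is dense in $H^1_0(\Omega)$ for the $H^1$-norm; concretely one inserts a logarithmic cutoff $\psi_\delta$ equal to $1$ near $0$ and supported in $B_\delta(0)$ with $\|\nabla\psi_\delta\|_{L^2}\to0$, writes $\phi=(1-\psi_\delta)\phi+\psi_\delta\phi$, and lets $\delta\to0$. Passing to this density limit yields $\int_\Omega \nabla W\cdot\nabla\phi\,dx=0$ for every $\phi\in H^1_0(\Omega)$; choosing $\phi=W$ gives $\int_\Omega|\nabla W|^2\,dx=0$, whence $W=0$, contradicting $\int_\Omega|W|^2\ge\delta>0$. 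This contradiction proves the lemma.

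A remark on the difficulty: every step except the last is routine functional analysis, and the whole argument hinges on showing that the rescaled weak limit $W$ vanishes. The essential point, and the only place the planar setting really enters, is the removability of the point $\{0\}$, i.e.\ that a function in $H^1_0(\Omega)$ which is weakly harmonic in $\Omega\setminus\{0\}$ must in fact be harmonic across $0$; this is where I would be most careful, invoking the vanishing capacity of a point in dimension two (equivalently the density of $C_c^\infty(\Omega\setminus\{0\})$ in $H^1_0(\Omega)$). Notably the argument uses no regularity of $f$ beyond $f\in H^1_0(\Omega)$, so it applies to an arbitrary such $f$.
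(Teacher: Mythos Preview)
Your argument is correct. The paper's own ``proof'' is a one-line citation: ``It's a direct corollary of Lemma~A.1 in \cite{AFHL18}.'' That lemma is a general statement of the same type for families $K_\eps$ concentrating to a set of zero capacity, and its proof proceeds exactly along the lines you wrote---rescale by the square root of the capacity, extract a weak $H^1_0$ limit which is harmonic off the limiting set, then use zero capacity of that set to remove the singularity and conclude the limit vanishes. So you have not taken a genuinely different route; you have simply written out, in the special case $K_\eps=s_\eps$ concentrating to $\{0\}$, the argument the paper outsources to the reference. The payoff of your version is self-containment; the payoff of the citation is brevity and the slightly more general formulation (arbitrary concentrating families rather than segments).
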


\begin{proof}It's a direct corollary of Lemma A.1 in \cite{AFHL18}.
\end{proof}

\begin{lemma}[Proposition 3.1 in \cite{ALM22}]\label{small-lem}
Let $(\mathcal H, \|\cdot\|)$  be a Hilbert space and $q$ be a quadratic form, semi-bounded from below (not necessarily positive), with
domain $\mathcal D$ dense in $\mathcal H$ and with discrete spectrum $\{ \nu_i \}_{i\geq1}$. Let $\{ g_i \}_{i\geq1}$ be an orthonormal basis of eigenvectors of $q$. Let $N$ and $m$ be positive integers, $F$ be an $m$-dimensional subspace of $\mathcal D$ and $\{ \xi_i^F\}_{i=1}^m$ be the eigenvalues of the restriction of $q$ to $F$.

Assume that there exist positive constants $\gamma$ and $\delta$ such that
\begin{itemize}
 \item[(H1)] $ 0<\delta<\gamma/\sqrt2$;
 \item[(H2)] for all $i\in\{1,\dots,m\}$, $|\nu_{N+i-1}|\le\gamma$, $\nu_{N+m}\ge \gamma$ and, if $N\ge2$, $\nu_{N-1}\le-\gamma$;
 \item[(H3)] $|q(\varphi,g)|\leq \delta\, \|\varphi \|\,\|g\|$ for all $g\in\mathcal D$ and $\varphi \in F$.
\end{itemize}
Then we have
\begin{itemize}
 \item[(i)] $\left|\nu_{N+i-1}- \xi_i^F \right|\le\frac{ 4}{\gamma}\delta^2$ for all $i=1,\ldots,m$;
 \item[(ii)] $\left\| \Pi_N - \mathbb{I}\right\|_{\mathcal L(F,\mathcal H)} \leq { \sqrt 2}\delta/\gamma$,  where $\Pi_N$ is the projection onto the subspace of $\mathcal D$ spanned by $\{g_N,\ldots,g_{N+m-1}\}$.
\end{itemize}
\end{lemma}

\noindent\textbf{Proof of Theorem \ref{main-mul}:}

\par For simplicity, denote
\[
\mu_j = \pi C_{k_j} \cdot  [\frac{\partial^{k_j}_{x_1} u_{N+m-p+j-1}(0,0)} {k_j!}]^2 ,\qquad v^\eps_j = \lambda_{N+m-p+j-1}^\eps - \lambda,
\]
for all $1 \le j \le p$. If $\kappa_1(u)\neq +\infty$, then
\[
\mathrm{Cap}_\Omega (s_\eps,u)= \mathrm{Cap}_\Omega (s_\eps,u-u_{\#,\kappa_1(u)-1})
\]
where $u_{\#,\kappa_1(u)-1}$ is defined in (\ref{taylor}) and if $\kappa_1(u)= +\infty$, then
\[
\mathrm{Cap}_\Omega (s_\eps,u)=0.
\]
So by Proposition \ref{asysegcap}, one has $\chi^2_\eps = \mathrm{O}(\rho^\eps_{k_p})$ as $\eps \to 0^+$. 

Next, we write the matrix of $r_\eps$ (defined in (\ref{r-eps})) under the basis $\{u_{N+i-1}\}_{i=1}^m$. One can relate $E_\infty$ to the kernel of $\mathrm{Cap}_\Omega(s_\eps, \cdot,\cdot)$, and observe a nested structure: $j>i$, $(r_\eps)_{ij}=\mathrm{o}((r_\eps)_{jj})$ and that the diagonal terms are in increasing order. Thus, by the above observations and Lemma \ref{L2-control},  the matrix of $r_\eps$  under the basis $\{u_{N+i-1}\}_{i=1}^m$ is of the form
\begin{equation*}
A_\eps=
\left(
\begin{array}{cccccc}
	&0     					&       &0						   \\
	& 						&\ddots &\vdots						   \\
	&0						& \cdots		&\mu_p \rho^\eps_{k_p}
\end{array}
\right)
+o\left(\rho_{k_p}^\eps\right).
\end{equation*}
So by min-max principle and (\ref{not-sharp-est}), one has
\[
v^\eps_{p} = \mu_p \rho^\eps_{k_p} + \mathrm{o}(\rho^\eps_{k_p}).
\]

For $v^\eps_{p-1}$, we apply Lemma \ref{small-lem} with
\[
\mathcal{H}^\eps = L^2 (\Omega \setminus s_\eps);
\]
\[
q^\eps_{p-1} (u)=\frac {1} {\rho^\eps_{k_p}} ( \int_{\Omega\setminus s_\eps} |\nabla u|^2 \mathrm{d}x - \lambda \int_{\Omega \setminus s_\eps} u^2 \mathrm{d}x ), \text{ for all } u \in H^1_0 (\Omega \setminus s_\eps);
\]
\[
F^\eps_{p-1} = \Pi_\eps (E_\infty \oplus E_1 \oplus \cdots \oplus E_{p-1}),
\]
where $\Pi_\eps u = u- V^\eps_u$. It's easy to check that there exists $\gamma > 0$ such that, for $\eps$ small enough,
\[
|\frac 1 {\rho^\eps_{k_p}} v^\eps_j| \le \gamma, \qquad \text{for } 1 \le j \le m-1;
\]
\[
\frac 1 {\rho^\eps_{k_p}} v^\eps_p \ge 2\gamma;
\]
and in case $N \ge 2$
\[
\lambda^\eps_{N-1} - \lambda \ge -2\gamma.
\]
Similar to the arguments in Section 3.1 in \cite{ALM22}, for all $v \in F^\eps_{p-1}$ and $w \in H^1_0 (\Omega \setminus s_\eps)$, one has
\[
|q^\eps_{p-1} (v,w)| \le \mathrm{o} \bigg( \big( \frac {\rho^\eps_{k_{p-1}}} {\rho^\eps_{k_p}} \big)^{1/2} \bigg) \|v\|_{L^2} \|w\|_{L^2}.
\]
Repeating the proof of Theorem 1.9  in Section 3.2 in \cite{ALM22}, one has
\[
v^\eps_{p-1} = \mu_{p-1} \rho^\eps_{k_{p-1}} + \mathrm{o} (\rho^\eps_{k_{p-1}}).
\]

Continue the above process, one will get
\[
v^\eps_j = \mu_j \rho^\eps_{k_j} + \mathrm{o} (\rho^\eps_{k_j}),\qquad \text{for all } 1 \le j \le p.
\]
Since the proof is vary similar to the proof of Theorem 1.17 in \cite{ALM22}, we omit the details. Last, by the fact that each function in $E_\infty$ is also an eigenfunction of the Dirichlet Laplacian on $\Omega \setminus s_\eps$, one has
\[
\lambda^\eps_{N+i-1} = \lambda,\qquad \text{for all } 1 \le i \le m-p.
\]
This completes the proof of Theorem \ref{main-mul}.
$\hfill\square$

\end{document}